\newtheorem{theorem}{Theorem}
\newtheorem{corollary}[theorem]{Corollary}
\newtheorem{defn}{Definition}
\newtheorem{ex}{Example}
\newtheorem{remark}{Remark}[section]
\title{Approximating fixed points of enriched contractions in Banach spaces}
\author{Vasile BERINDE$^{1,2}$}
\author{M\u ad\u alina P\u ACURAR$^{3}$}
\begin{document}
\maketitle \pagestyle{myheadings} \markboth{Vasile Berinde and M\u ad\u alina P\u ACURAR} {Enriched Kannan type...}
\begin{abstract}
We introduce a large class of mappings, called enriched contractions, which includes, amongst many other contractive type mappings, the Picard-Banach contractions and some nonexpansive mappings. We show that any enriched contraction has a unique fixed point and that this fixed point can be approximated by means of an appropriate Kransnoselskij iterative scheme. Several important results in fixed point theory are shown to be corollaries or consequences of the main results in this paper. We also study the fixed points of local enriched contractions, asymptotic enriched contractions and Maia type enriched contractions. Examples to illustrate the generality of our new concepts and the corresponding fixed point theorems are also given.

\end{abstract}

\section{Introduction}
Let $X$ be a nonempty set and $T:X\rightarrow X$ a self mapping. We denote the set of fixed points of $T$ by $Fix\,(T)$, i.e., $Fix\,(T)=\{a\in X: T(a)=a\}$ and define the $n^{th}$ iterate of $T$ as usually, that is, $T^0=I$ (identity map) and $T^n=T^{n-1}\circ T$, for $n\geq 1$.

$T$ is said to be a {\it Picard operator}, see for example Rus \cite{Rus01}, if\\ (i) $Fix\,(T)=\{p\}$; (ii)  $T^n(x_0) \rightarrow p$ as $n\rightarrow \infty$, for any $x_0$ in $X$.

The most useful class of Picard operators, that play a crucial role in nonlinear analysis, is the  class of mappings known in literature as {\it Picard-Banach contractions}, first introduced by Banach in \cite{Ban22}, in the case of what we call now a Banach space, and then extended to complete metric spaces by Caccioppoli  \cite{Cac}. 

Banach contraction mapping principle essentially states that, in a complete metric space $(X,d)$, any contraction $T:X\rightarrow X$, that is, any mapping for which there exists $c\in[0,1)$ such that
\begin{equation} \label{eq1}
d(Tx,Ty)\leq c\cdot d(x,y),\forall x,y \in X,
\end{equation}
is a Picard operator.

Starting from Picard-Banach fixed point theorem, a very impressive literature has developed in the last nine decades or so, see for example the monographs \cite{Ber07}, \cite{Rus08} and references therein. Moreover, the Picard-Banach fixed point theorem itself and some of its extensions have become very useful and versatile tools in solving various nonlinear problems: differential equations, integral equations, integro-differential equations, optimisation problems, variational inequalities etc., see \cite{Ber07}, \cite{Rus08}, \cite{Zei85} and \cite{Zei86} and the extensive literature cited there. 

Having in view both the theoretical and application importance of the Banach contraction mapping principle, the aim of this paper in to introduce a larger class of Picard operators, called enriched Banach contractions, that includes Picard-Banach contractions as a particular case and also includes some nonexpansive mappings. 

We then study the existence and uniqueness of fixed points of enriched Banach contractions and prove a strong convergence theorem for Kransnoselskij iteration used to approximate the fixed points of enriched Banach contractions. We also consider a local variant of Picard-Banach fixed point theorem and extend further the main results to the case of asymptotic enriched Banach contractions. Examples to illustrate the generality of our new results are also presented.

\section{Approximating fixed points of enriched contractions} 

\begin{defn}
Let $(X,\|\cdot\|)$ be a linear normed space. A mapping $T:X\rightarrow X$ is said to be an {\it enriched contraction} if there exist $b\in[0,ü\infty)$ and $\theta\in[0,b+1)$ such that
\begin{equation} \label{eq3}
\|b(x-y)+Tx-Ty\|\leq \theta \|x-y\|,\forall x,y \in X.
\end{equation}
To indicate the constants involved in \eqref{eq3} we shall also call  $T$ a  $(b,\theta$)-{\it enriched contraction}. 
\end{defn}

\begin{ex}[ ] \label{ex1}
\indent

(1) Any contraction $T$ with contraction constant $c$ is a $(0,c)$-enriched contraction, i.e., $T$ satisfies \eqref{eq3} with $b=0$ and $\theta=c$.

(2) Let $X=[0,1]$ be endowed with the usual norm and let $T:X\rightarrow X$ be defined by $Tx=1-x$, for all $x\in [0,1]$. Then $T$ is nonexpansive (it is an isometry),  $T$ is not a contraction but $T$ is an enriched contraction. Indeed, if $T$ would be a contraction then, by \eqref{eq1}, there would exist $c\in[0,1)$ such that 
$$
|x-y|\leq c \cdot |x-y|,\forall x,y \in [0,1],
$$
which, for any $x\neq y$, leads to the contradiction $1\leq c < 1$. 

On the other hand, the enriched contraction condition \eqref{eq3} is in this case equivalent to
$$
|(b-1)(x-y)|\leq  \theta \cdot |x-y|,\forall x,y \in [0,1],
$$
with $\theta\in[0,b+1)$.  For $x\neq y$ the previous inequality holds true if one chooses $0<b<1$ and $\theta=1-b$.  

Hence, for any $b\in(0,1)$, $T$ is a $(b,1-b)$-enriched contraction. Note also that $Fix\,(T)=\left\{\dfrac{1}{2}\right\}$.

\end{ex}
\begin{remark}
We note that for $T$ in Example \ref{ex1} (2), Picard iteration $\{x_n\}$ associated to $T$, that is, the sequence $x_{n+1}=1-x_n$, $n\geq 0$, does not converge for any initial guess $x_0$ different of $\dfrac{1}{2}$, the unique fixed point to $T$. 

This suggests us that, in order to approximate fixed points of enriched contractions we need more elaborate fixed point iterative schemes, like  Krasnoselskij iterative method, for which we prove in the following a strong convergence theorem in the class of {\it enriched} contractions.
\end{remark}

\begin{theorem}  \label{th1}
Let $(X,\|\cdot\|)$ be a Banach space and $T:X\rightarrow X$ a $(b,\theta$)-{\it enriched contraction}. Then

$(i)$ $Fix\,(T)=\{p\}$;

$(ii)$ There exists $\lambda\in (0,1]$ such that the iterative method
$\{x_n\}^\infty_{n=0}$, given by
\begin{equation} \label{eq3a}
x_{n+1}=(1-\lambda)x_n+\lambda T x_n,\,n\geq 0,
\end{equation}
converges to p, for any $x_0\in X$;

$(iii)$ The following estimate holds
\begin{equation}  \label{3.2-1}
\|x_{n+i-1}-p\| \leq\frac{c^i}{1-c}\cdot \|x_n-
x_{n-1}\|\,,\quad n=0,1,2,\dots;\,i=1,2,\dots,
\end{equation}
where $c=\dfrac{\theta}{b+1}$.
\end{theorem}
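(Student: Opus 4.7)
The plan is to reduce everything to the classical Banach contraction mapping principle by averaging $T$ with the identity. For a parameter $\lambda\in(0,1]$ define the averaged operator
\begin{equation*}
T_\lambda x=(1-\lambda)x+\lambda Tx,
\end{equation*}
and observe that for every such $\lambda$ one has $\mathrm{Fix}(T_\lambda)=\mathrm{Fix}(T)$, while the Krasnoselskij iterates \eqref{eq3a} of $T$ are exactly the Picard iterates of $T_\lambda$.

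The key computational observation is the identity
\begin{equation*}
T_\lambda x-T_\lambda y=\lambda\Bigl[\tfrac{1-\lambda}{\lambda}(x-y)+Tx-Ty\Bigr].
\end{equation*}
I would therefore choose $\lambda$ so that $\tfrac{1-\lambda}{\lambda}=b$, namely $\lambda=\tfrac{1}{b+1}\in(0,1]$. Substituting this back and using the enriched contraction inequality \eqref{eq3} turns the bracket into $\theta\|x-y\|$, giving
\begin{equation*}
\|T_\lambda x-T_\lambda y\|\leq \tfrac{\theta}{b+1}\,\|x-y\|=c\,\|x-y\|,
\end{equation*}
with $c\in[0,1)$ by the assumption $\theta<b+1$. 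So $T_\lambda$ is a genuine Banach contraction on the Banach space $X$.

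With this reduction, (i) and (ii) follow immediately from the Banach contraction principle applied to $T_\lambda$: it has a unique fixed point $p$, which by the remark above is also the unique fixed point of $T$, and the Picard iterates of $T_\lambda$—that is, the Krasnoselskij iterates of $T$—converge to $p$ from any starting point $x_0$. For (iii), I would invoke the standard a priori/a posteriori error estimate that accompanies Banach's theorem for $T_\lambda$ with contraction constant $c$, which yields precisely \eqref{3.2-1}. The degenerate case $b=0$ simply gives $\lambda=1$ and recovers the classical theorem directly, so no separate argument is needed.

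There is no real obstacle; the only nontrivial step is spotting the correct value $\lambda=1/(b+1)$ that converts \eqref{eq3} into a Banach contraction condition for $T_\lambda$. Once this is in place, everything reduces to quoting Banach's principle and rewriting its conclusions in terms of the original operator $T$.
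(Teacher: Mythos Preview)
Your proposal is correct and follows essentially the same route as the paper: set $\lambda=1/(b+1)$, observe that $T_\lambda$ is a Banach contraction with constant $c=\theta/(b+1)$, identify the Krasnoselskij iterates with the Picard iterates of $T_\lambda$, and read off the conclusions. The only cosmetic differences are that the paper splits into the cases $b>0$ and $b=0$ and writes out the Cauchy-sequence and error-estimate arguments explicitly, whereas you treat both cases at once and simply invoke the contraction mapping principle together with its standard error bounds.
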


\begin{proof}
We split the proof into two different cases.

{\bf Case 1.} $b>0$. In this case, let us denote $\lambda=\dfrac{1}{b+1}$. Obviously, $0<\lambda<1$ and the enriched contractive condition \eqref{eq3} becomes
$$
\left \|\left(\frac{1}{\lambda}-1\right)(x-y)+Tx-Ty\right\|\leq \theta \|x-y\|,\forall x,y \in X,
$$
which can be written in an equivalent form as
\begin{equation} \label{eq5}
\|T_\lambda x-T_\lambda y\|\leq c \cdot \|x-y\|,\forall x,y \in X,
\end{equation}
where we denoted $c=\lambda \theta$ and
\begin{equation} \label{eq4}
T_\lambda (x)=(1-\lambda)x+\lambda T(x), \forall  x \in X.
\end{equation}
Since $\theta\in(0,b+1)$, it follows that $c\in(0,1)$ and therefore inequality \eqref{eq4} shows that $T_\lambda$ is a $c$-contraction. 

Note, in passing, that  $T$ and $T_\lambda$ are linked by the following important property:
\begin{equation} \label{eq4a}
Fix(\,T_\lambda)=Fix\,(T).
\end{equation}
In view of \eqref{eq4}, the Krasnoselskij iterative process $\{x_n\}^\infty_{n=0}$ defined by  \eqref{eq3a} is exactly the Picard iteration associated to $T_\lambda$, that is,
\begin{equation} \label{eq4b}
x_{n+1}=T_\lambda x_n,\,n\geq 0.
\end{equation}
Take $x=x_n$ and $y=x_{n-1}$ in  \eqref{eq5} to get
\begin{equation} \label{eq6}
\|x_{n+1}-x_{n}\|\leq c \cdot \|x_{n}-x_{n-1}\|,\,n\geq 1.
\end{equation}
By \eqref{eq6} one obtains routinely the following two estimates
\begin{equation} \label{eq7a}
\|x_{n+m}-x_{n}\|\leq c^n \cdot \frac{1-c^m}{1-c}\cdot \|x_{1}-x_{0}\|,\,n\geq 0, m\geq 1.
\end{equation}
and
\begin{equation} \label{eq8}
\|x_{n+m}-x_{n}\|\leq c \cdot \frac{1-c^m}{1-c}\cdot \|x_{n}-x_{n-1}\|,\,n\geq 1, \,m\geq 1.
\end{equation}
Now, by \eqref{eq7a} it follows that $\{x_n\}^\infty_{n=0}$ is a Cauchy sequence and hence it is convergent in the Banach space $(X,\|\cdot\|)$. Let us denote
\begin{equation} \label{eq10a}
p=\lim_{n\rightarrow \infty} x_n.
\end{equation}
By letting  $n\rightarrow \infty$ in \eqref{eq4b} and using the continuity of $T_\lambda$ we immediately obtain
$$
p=T_\lambda p,
$$
that is, $p\in Fix\,(T_\lambda)$. 

Next, we prove that $p$ is the unique fixed point of $T_\lambda$. Assume that $q\neq p$ is another fixed point of $T_\lambda$. Then, by \eqref{eq5} 
$$
0<\|p-q\|\leq c \cdot \|p-q\|<\|p-q\|,
$$
a contradiction. Hence $Fix\,(T_\lambda)=\{p\}$ and since, by \eqref{eq4b},  $Fix\,(T)=Fix(\,T_\lambda)$, claim $(i)$ is proven.

Conclusion $(ii)$ follows by \eqref{eq10a}.

To prove $(iii)$, we let $m\rightarrow \infty$ in \eqref{eq7a}  and \eqref{eq8} to get
\begin{equation} \label{eq11}
\|x_n-p\|\leq  \frac{c^n}{1-c}\cdot \|x_{1}-x_{0}\|,\,n\geq 1
\end{equation}
and
\begin{equation} \label{eq12}
\|x_n-p\|\leq \frac{c}{1-c}\cdot \|x_{n}-x_{n-1}\|,\,n\geq 1,
\end{equation}
respectively, where $c=\dfrac{\theta}{b+1}$. Now one can merge \eqref{eq11}  and \eqref{eq12} to get the unifying error estimate \eqref{3.2-1}.

{\bf Case 2.} $b=0$. In this case,  $\lambda=1$, $c=\theta$ and we proceed like in Case 1 but with $T(=T_1)$ instead of $T_\lambda$, when Kasnoselskij iteration \eqref{eq3a} reduces in fact to the simple Picard iteration associated to $T$,
$$
x_{n+1}=T x_n,\,n\geq 0.
$$
\end{proof}

\begin{remark}
1) In the particular case $b=0$, by Theorem \ref{th1} we get the classical Banach contraction fixed point theorem (see Banach \cite{Ban22}) in the setting of a Banach space.

2) Enriched Kannan contractions have been introduced and studied by Berinde and P\u acurar in \cite{Ber19e}.  Although the mapping $T$ in Example \ref{ex1} (2) is simultaneously an enriched Kannan mapping and an enriched contraction, it is easily seen that the class of enriched Kannan contractions  is also independent of the class of enriched Banach contractions, due to the fact that, see for example \cite{Rho}, the class of Kannan contractions  is independent of the class of Banach contractions. 

We shall call a mapping $T$ to be a strictly enriched Kannan mapping if it is not a usual Kannan mapping. So, an open problem would be to find a strictly enriched Kannan mapping which is not an enriched contraction.
\end{remark}

\section{Local and asymptotic versions of enriched contraction mapping principle} 

The Picard-Banach contraction mapping principle has a useful local version, see for example \cite{Gran},  which involves an open ball $B$ in a complete metric space $(X,d)$ and a nonself contraction map of $B$ into $X$ which has the essential property that does not displace the centre of the ball too far. The analogue of this result in the case of  enriched contractions is given by the following Corollary.

\begin{corollary}\label{cor1}
Let $(X,\|\cdot\|)$ be a Banach space, $B=B(x_0,r):=\{x\in X:\|x-x_0\|<r\}$, $r>0$ and let $T:B\rightarrow X$ be a $(b,\theta$)-enriched contraction. If $\|Tx_0-x_0\|<(b+1-\theta)r$, then $T$ has a fixed point.
\end{corollary}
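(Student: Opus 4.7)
The plan is to reduce the corollary to the classical local version of the Banach contraction principle by recycling the averaging device that powered Theorem \ref{th1}. First I would set $\lambda = 1/(b+1) \in (0,1]$ and define $T_\lambda(x) = (1-\lambda)x + \lambda T(x)$ on $B$. Repeating verbatim the algebra from Case 1 of the proof of Theorem \ref{th1} — which uses \eqref{eq3} only pointwise and hence makes sense as long as $x, y \in B$ — the enriched contraction condition rewrites as
\[
\|T_\lambda x - T_\lambda y\| \leq c\,\|x - y\|, \qquad \forall x, y \in B,
\]
with $c = \theta/(b+1) \in [0,1)$. Thus $T_\lambda : B \to X$ is a genuine $c$-contraction on the open ball, and $Fix\,(T) = Fix\,(T_\lambda)$, so it suffices to exhibit a fixed point of $T_\lambda$ lying inside $B$.

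Next I would translate the centre-displacement hypothesis into the language of $T_\lambda$. Since $T_\lambda x_0 - x_0 = \lambda (T x_0 - x_0)$, the assumption $\|T x_0 - x_0\| < (b+1-\theta)r$ is equivalent to
\[
\|T_\lambda x_0 - x_0\| = \lambda \|T x_0 - x_0\| < \frac{b+1-\theta}{b+1}\, r = (1-c)\,r,
\]
which is exactly the hypothesis of the classical local Banach contraction principle (cf.\ \cite{Gran}). From here the route is standard: pick $r_0 \in (0, r)$ with $\|T_\lambda x_0 - x_0\| \leq (1-c) r_0$; for any $x$ in the closed sub-ball $\bar B(x_0, r_0)$, the triangle inequality together with the contraction condition gives
\[
\|T_\lambda x - x_0\| \leq \|T_\lambda x - T_\lambda x_0\| + \|T_\lambda x_0 - x_0\| \leq c\, r_0 + (1-c)\, r_0 = r_0,
\]
so $T_\lambda$ maps the complete metric space $\bar B(x_0, r_0)$ into itself. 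Theorem \ref{th1} applied to the restriction $T_\lambda : \bar B(x_0, r_0) \to \bar B(x_0, r_0)$ (the $b=0$ case is enough here, since $T_\lambda$ is already an honest contraction) then produces a unique fixed point $p \in \bar B(x_0, r_0) \subset B$, which by $Fix\,(T) = Fix\,(T_\lambda)$ is also the desired fixed point of $T$.

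The main obstacle — mild, but the only genuinely nontrivial point — is precisely this self-map step: $T$ is a nonself mapping on $B$, so neither $T$ nor $T_\lambda$ is a priori known to stabilise any subset of $B$, and the Krasnoselskij iterate of Theorem \ref{th1} could in principle escape $B$ after just one step. The strict inequality in $\|T x_0 - x_0\| < (b+1-\theta)r$ is what creates the slack needed to choose a slightly smaller closed sub-ball $\bar B(x_0, r_0)$ that $T_\lambda$ does leave invariant; once this is in place, the global result takes over. The extremal case $b = 0$ is immediate, since then $\lambda = 1$, $T_\lambda = T$, and the statement degenerates literally into the classical local Banach contraction principle on $(X, \|\cdot\|)$.
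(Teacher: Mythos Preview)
Your proposal is correct and follows essentially the same approach as the paper: reduce to the averaged map $T_\lambda$, which is a genuine $c$-contraction on $B$ with $c=\theta/(b+1)$, rewrite the displacement hypothesis as $\|T_\lambda x_0 - x_0\| < (1-c)r$, use the strict inequality to pass to a slightly smaller closed sub-ball that $T_\lambda$ leaves invariant, and then invoke Theorem~\ref{th1} (or the Banach principle) on that complete subset. The paper's proof differs only cosmetically, writing $\varepsilon$ for your $r_0$.
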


\begin{proof}
W can choose $\varepsilon<r$ such that 
$$
\|Tx_0-x_0\|\leq (b+1-\theta)\varepsilon<(b+1-\theta)r. 
$$
On the other hand, as $T$ is a $(b,\theta$)-enriched contraction, there exist $b\in[0,ü\infty)$ and $\theta\in[0,b+1)$ such that
\begin{equation} \label{eq3a}
\|b(x-y)+Tx-Ty\|\leq \theta \|x-y\|,\forall x,y \in B.
\end{equation}
If $b>0$, we denote $\lambda=\dfrac{1}{b+1}$ which implies $0<\lambda<1$. So, the enriched contractive condition \eqref{eq3a} becomes
$$
\left \|\left(\frac{1}{\lambda}-1\right)(x-y)+Tx-Ty\right\|\leq \theta \|x-y\|,\forall x,y \in B,
$$
which can be written in an equivalent form as
\begin{equation} \label{eq5a}
\|T_\lambda x-T_\lambda y\|\leq c \cdot \|x-y\|,\forall x,y \in B,
\end{equation}
where we denoted $c=\lambda \theta=\dfrac{\theta}{b+1}$ and
\begin{equation} \label{eq4x}
T_\lambda (x)=(1-\lambda)x+\lambda T(x), \forall  x \in B.
\end{equation}
Since $\theta\in(0,b+1)$, it follows that $c\in(0,1)$ and therefore inequality \eqref{eq4x} shows that $T_\lambda$ is a $c$-contraction on $B$. 

Note also that the inequality $
\|Tx_0-x_0\|\leq (b+1-\theta)\varepsilon  
$
can be written equivalently as $\|T_\lambda x_0-x_0\|\leq \left(1-\dfrac{\theta}{b+1}\right)\varepsilon=(1-c)\varepsilon$.

We now prove that the closed ball $\overline{B}:=\{x\in X:\|x-x_0\|\leq \varepsilon\}$ is invariant under $T_\lambda$. Indeed, for any $x\in \overline{B}$ we have
$$
\|T_\lambda(x)-x_0\|\leq \|T_\lambda(x)-T_\lambda(x_0)\|+\|T_\lambda(x_0)-x_0\|
$$
$$
\leq  c \|x-x_0\|+(1-c)\varepsilon \leq c \varepsilon+(1-c)\varepsilon=\varepsilon,
$$
which proves that $T_\lambda (x)\in \overline{B}$, for any $x \in \overline{B}$.

Since $\overline{B}$ is complete, conclusion follows by Theorem \ref{th1}.

If $b=0$, we proceed in a similar way but we shall use the contraction mapping principle to get the final conclusion.
\end{proof}

\begin{remark}
In the particular case $b=0$, by Corollary \ref{cor1} we obtain the local variant of the contraction mapping principle, see for example Corollary (1.2) in \cite{Gran}.
\end{remark}

The following example shows that  there exist mappings $T$ which are not contractions but a certain iterate of them is  a contraction.

\begin{ex} (Examples 1.3.1, \cite{Rus01a}) \label{ex2}
Let $X=\mathbb{R}$ and $T:X\rightarrow X$ be given by $Tx=0$, if $x\in (-\infty,2]$ and $Tx=-\dfrac{1}{3}$, if $x\in (2,+\infty)$. Then $T$ is not a contraction (being discontinuous) but $T^2$ is a contraction.
\end{ex}

In such a case, we cannot apply the classical Picard-Banach contraction mapping principle and thus the following fixed point theorem is useful, see for example Theorem 1.3.2 in \cite{Rus01a}.

\begin{theorem}\label{th2}
Let $(X,d)$ be a complete metric space and let $T:X\rightarrow X$ be a mapping. If there exists a positive integer $N$ such that $T^N$ is a contraction, then $Fix\,(T)=\{x^*\}$.
\end{theorem}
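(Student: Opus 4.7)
The plan is to apply the Picard-Banach contraction principle to the iterate $T^N$ and then transfer the conclusion back to $T$ itself, using the commutation $T\circ T^N = T^N\circ T$.

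First, since $T^N:X\to X$ is a contraction on the complete metric space $(X,d)$, Banach's contraction mapping principle yields a unique $x^*\in X$ with $T^N x^* = x^*$, and moreover $Fix(T^N) = \{x^*\}$. So I would begin by naming this $x^*$.

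Next, I would show $x^*\in Fix(T)$. The key observation is that $T$ commutes with its own powers, hence
\[
T^N(Tx^*) = T(T^N x^*) = T x^*,
\]
which says $Tx^*$ is a fixed point of $T^N$. Since $Fix(T^N)=\{x^*\}$, this forces $Tx^* = x^*$.

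Finally, for uniqueness of the fixed point of $T$, any $y\in Fix(T)$ satisfies $T^N y = y$ by an immediate induction on the number of applications of $T$, so $y\in Fix(T^N)=\{x^*\}$ and thus $y=x^*$. Putting the three steps together gives $Fix(T) = \{x^*\}$. I do not foresee any real obstacle here; the only subtlety worth flagging is that, unlike the case $N=1$, one cannot conclude from this theorem alone that the Picard iterates $\{T^n x_0\}$ converge to $x^*$ for every starting point $x_0$ under the metric $d$ — convergence along the full orbit follows by a standard argument splitting indices modulo $N$, but that is not part of the stated claim and so need not be addressed in the proof of Theorem \ref{th2}.
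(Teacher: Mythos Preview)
Your proof is correct, and it is the standard argument. Note, however, that the paper does not actually give its own proof of this theorem: it is quoted as a known result (Theorem~1.3.2 in \cite{Rus01a}) without proof. That said, your approach matches exactly the argument the paper supplies for the enriched-contraction analogue (Theorem~\ref{th3}): obtain the unique fixed point $p$ of the iterate, compute $T^N(Tp)=T(T^N p)=Tp$, and conclude $Tp=p$ from uniqueness of $Fix(T^N)$. Your explicit remark on uniqueness of $Fix(T)$ via $Fix(T)\subseteq Fix(T^N)$ is a detail the paper omits in the proof of Theorem~\ref{th3} but which is of course implicit.
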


Our first aim in this section is to obtain a similar result for the more general case of enriched contractions in the setting of a Banach space.

\begin{theorem}\label{th3}
Let $(X,\|\cdot\|)$ be a Banach space and let $U:X\rightarrow X$ be a mapping with the property that there exists a positive integer $N$ such that $U^N$ is a $(b,\theta$)-enriched contraction. Then 

$(i)$ $Fix\,(U)=\{p\}$;

$(ii)$ There exists $\lambda\in (0,1]$ such that the iterative method
$\{x_n\}^\infty_{n=0}$, given by
$$
x_{n+1}=(1-\lambda)x_n+\lambda U^N x_n,\,n\geq 0,
$$
converges to $p$, for any $x_0\in X$.
\end{theorem}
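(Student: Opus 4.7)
The plan is to reduce Theorem \ref{th3} to Theorem \ref{th1} by setting $T:=U^N$. Since $T$ is a $(b,\theta)$-enriched contraction on the Banach space $(X,\|\cdot\|)$, Theorem \ref{th1} applies directly and yields a unique $p\in X$ with $U^Np=p$, together with a $\lambda\in(0,1]$ such that the Krasnoselskij scheme $x_{n+1}=(1-\lambda)x_n+\lambda U^Nx_n$ converges to $p$ for every $x_0\in X$. This immediately takes care of part $(ii)$, and it gives the set equality $\mathrm{Fix}(U^N)=\{p\}$, which is \emph{half} of part $(i)$.

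The remaining step for $(i)$ is to upgrade $\mathrm{Fix}(U^N)=\{p\}$ to $\mathrm{Fix}(U)=\{p\}$. The inclusion $\mathrm{Fix}(U)\subseteq \mathrm{Fix}(U^N)$ is trivial, so only uniqueness in the other direction needs work, and the standard commutation trick will do: from $U^Np=p$ one applies $U$ to both sides to obtain $U^N(Up)=U(U^Np)=Up$, so $Up$ lies in $\mathrm{Fix}(U^N)$. Since $\mathrm{Fix}(U^N)$ is the singleton $\{p\}$, this forces $Up=p$, i.e. $p\in\mathrm{Fix}(U)$; and any fixed point of $U$ is automatically a fixed point of $U^N$, hence must equal $p$.

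I expect no serious obstacle here, since both ingredients (the already established Theorem \ref{th1} and the commutation argument) are routine. The only point that requires a little care is keeping the roles of $U$ and $U^N$ straight in the Krasnoselskij iteration: the scheme in the statement of Theorem \ref{th3} involves $U^N$, not $U$, so the convergence assertion is an immediate translation of Theorem \ref{th1}$(ii)$ applied to $T=U^N$, without any need to analyse Picard iterates of $U$ itself. In the edge case $b=0$ one has $\lambda=1$ and the scheme degenerates to $x_{n+1}=U^Nx_n$, which is handled by Case 2 of the proof of Theorem \ref{th1} exactly as in Theorem \ref{th2}.
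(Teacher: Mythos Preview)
Your proposal is correct and follows essentially the same approach as the paper: apply Theorem \ref{th1} to $T=U^N$ to obtain $\mathrm{Fix}(U^N)=\{p\}$ and the convergence of the Krasnoselskij scheme, then use the commutation identity $U^N(Up)=U(U^Np)=Up$ together with uniqueness to conclude $Up=p$. Your write-up is in fact slightly more detailed than the paper's, explicitly noting the trivial inclusion $\mathrm{Fix}(U)\subseteq\mathrm{Fix}(U^N)$ and the edge case $b=0$.
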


\begin{proof}
We apply Theorem \ref{th1} (i) for the mapping $T=U^N$ and obtain that $Fix\,(U^N)=\{p\}$. We also have
$$
U^N(U(p))=U^{N+1}(p)=U(U^N(p))=U(p),
$$
which shows that $U(p)$ is a fixed point of $U^N$. But $U^N$ has a unique fixed point, $p$, hence $U(p)=p$ and so $p\in Fix\,(U)$. 

The remaining part of the proof follows by Theorem \ref{th1}. 
\end{proof}

One of the most interesting generalizations of contraction mapping principle is the so-called Maia fixed point theorem, see \cite{Maia}, which was obtained by splitting the assumptions in the contraction mapping principle among two metrics defined on the same set. Its statement reads as follows.

 \begin{theorem} (Theorem 1.3.10, \cite{Rus01a})\label{th4}
 Let $X$ be a nonempty set, $d$ and $\rho$ two metrics on $X$ and $T : X \rightarrow X$ a mapping. Suppose that
 
 (i) $d(x,y)\leq \rho(x,y) $, for each $x,y \in X$;
 
 (ii) $(X, d)$ is a complete metric space;
 
 (iii) $T : X \rightarrow X$ is continuous  with respect to the metric $d$;
 
 (iv) $T$ is a contraction mapping with respect to the metric $\rho$. 
 
 Then $T$ is a Picard operator.
 \end{theorem}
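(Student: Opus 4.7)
The plan is to imitate the Picard--Banach argument but to negotiate the two metrics carefully: the contraction in $\rho$ will manufacture a Cauchy sequence of iterates, while completeness and continuity in $d$ will be used to extract a limit and then pass to the fixed point equation. Concretely, fix an arbitrary $x_0\in X$ and form the Picard iteration $x_{n+1}=Tx_n$, $n\geq 0$.

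First, I would apply hypothesis (iv): since $T$ is a $\rho$-contraction with some constant $c\in[0,1)$, the standard telescoping estimate gives
\[
\rho(x_{n+m},x_n)\leq \frac{c^n}{1-c}\,\rho(x_1,x_0),
\]
so $\{x_n\}$ is $\rho$-Cauchy. Invoking (i), the pointwise domination $d\leq\rho$ transfers this to say that $\{x_n\}$ is also $d$-Cauchy. By (ii), $(X,d)$ is complete, so there exists $p\in X$ with $x_n\to p$ in $d$. Using (iii), the $d$-continuity of $T$ yields $Tx_n\to Tp$ in $d$; but $Tx_n=x_{n+1}\to p$ as well, so by uniqueness of $d$-limits $Tp=p$. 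For uniqueness of the fixed point, if $p,q\in Fix\,(T)$ then applying (iv) yields $\rho(p,q)=\rho(Tp,Tq)\leq c\,\rho(p,q)$, forcing $\rho(p,q)=0$ and hence $p=q$. Both conditions in the definition of a Picard operator on $(X,d)$ are therefore met.

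The main obstacle, and the point on which the proof really turns, is that neither metric alone supplies all the hypotheses needed to run the Banach principle: we have not assumed that $(X,\rho)$ is complete, and we have not assumed that $T$ is a $d$-contraction. The inequality $d\leq\rho$ is precisely the bridge that lets the $\rho$-machinery (producing a Cauchy sequence via contraction) feed into the $d$-machinery (producing a limit via completeness, then a fixed point via continuity). Once this interaction between the two metrics is identified, every remaining step is a straightforward bookkeeping exercise borrowed verbatim from the classical contraction mapping argument.
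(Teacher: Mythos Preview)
Your proof is correct. The paper does not actually supply its own proof of this theorem---it is quoted as a known result from Rus \cite{Rus01a}---but the paper's proof of the immediately following generalization (Theorem~\ref{th5}) follows exactly the outline you give: use the contraction hypothesis in the second metric to show the iterates are Cauchy, transfer Cauchyness to the first metric via the domination $d\leq\rho$, extract a limit by completeness of $(X,d)$, identify it as a fixed point by $d$-continuity, and conclude uniqueness from the $\rho$-contraction. Your argument is the $b=0$ (equivalently $\lambda=1$, $T_\lambda=T$) specialization of that proof, so the approaches coincide.
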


The next theorem is an extension of Theorem \ref{th4} to the class of enriched contractions but is established in the particular case of a Banach space.

\begin{theorem} (Theorem 1.3.10, \cite{Rus01a})\label{th5}
 Let $X$ be a linear space, $\|\cdot \|_d$ and $\|\cdot\|_{\rho}$ two norms on $X$ and $T : X \rightarrow X$ a mapping. Suppose that
 
 (i) $\|x-y\|_d\leq \|x-y\|_{\rho} $, for each $x,y \in X$;
 
 (ii) $(X, \|\cdot \|_d)$ is a Banach space;
 
 (iii) $T : X \rightarrow X$ is continuous  with respect to the norm $\|\cdot\|_{\rho}$;
 
 (iv) $T$ is a $(b,\theta)$-enriched contraction mapping with respect to the norm $\|\cdot\|_{\rho}$. 
 
 Then $T$ is a Picard operator.
 \end{theorem}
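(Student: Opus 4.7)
The strategy is to mimic the two-metric structure of Maia's original argument, replacing $T$ by the averaged operator $T_\lambda$ that was used in the proof of Theorem \ref{th1}. Hypothesis (iv) plays the role of $\rho$-contractivity, hypothesis (ii) plays the role of $d$-completeness, and the compatibility inequality (i) is exactly what lets us transfer information from one norm to the other.

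Assume first $b>0$ and set $\lambda=\dfrac{1}{b+1}\in(0,1)$, $c=\dfrac{\theta}{b+1}\in[0,1)$. Exactly as in the proof of Theorem \ref{th1}, condition (iv) rewrites as
$$
\|T_\lambda x-T_\lambda y\|_\rho\leq c\,\|x-y\|_\rho,\qquad \forall\,x,y\in X,
$$
where $T_\lambda x=(1-\lambda)x+\lambda Tx$, and $Fix\,(T_\lambda)=Fix\,(T)$. For an arbitrary $x_0\in X$ form the Krasnoselskij iteration $x_{n+1}=T_\lambda x_n$. The familiar geometric estimate yields
$$
\|x_{n+m}-x_n\|_\rho\leq \dfrac{c^n(1-c^m)}{1-c}\,\|x_1-x_0\|_\rho,
$$
so $\{x_n\}$ is $\rho$-Cauchy, and by (i) also $d$-Cauchy. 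By (ii) there exists $p\in X$ with $x_n\to p$ in $\|\cdot\|_d$.

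To conclude that $p$ is a fixed point, I would pass to the limit in $x_{n+1}=T_\lambda x_n$ using the continuity assumption (iii), interpreted as $\|\cdot\|_d$-continuity of $T$ (hence of $T_\lambda$), in the spirit of the classical Maia theorem cited in \cite{Rus01a}; this yields $T_\lambda p=p$, whence $p\in Fix\,(T)$. Uniqueness: if $p,q$ are both $T$-fixed, they are both $T_\lambda$-fixed, and the $\rho$-contraction inequality gives $\|p-q\|_\rho\leq c\|p-q\|_\rho$, forcing $p=q$. Since $x_0$ was arbitrary, $T$ is a Picard operator (in the $d$-topology). The case $b=0$ is identical with $\lambda=1$ and $T_\lambda=T$.

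The main obstacle is the precise reading of (iii). As literally stated, the continuity is in the stronger norm $\|\cdot\|_\rho$, but $d$-convergence of $\{x_n\}$ does not automatically imply $\rho$-convergence, so $\rho$-continuity alone does not license passing to the limit in $x_{n+1}=T_\lambda x_n$. The reading consistent with the Maia template, and the one on which my plan rests, is $\|\cdot\|_d$-continuity of $T$; should the authors really intend $\rho$-continuity, one would need an extra bridge between the two norms (for example, a density argument or a uniform-continuity condition transferring $d$-limits to $\rho$-limits) in order to complete the limit passage.
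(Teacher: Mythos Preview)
Your argument is essentially identical to the paper's: introduce $T_\lambda$ with $\lambda=\dfrac{1}{b+1}$, use (iv) to see that $T_\lambda$ is a $\rho$-contraction, obtain a $\rho$-Cauchy (hence by (i) a $d$-Cauchy) Picard sequence, take its $d$-limit via (ii), invoke (iii) to identify the limit as a fixed point of $T_\lambda=T$, and use the $\rho$-contraction for uniqueness. The paper's proof is a terse version of exactly this.

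Your closing caveat about hypothesis (iii) is well taken and in fact applies to the paper's own proof: the authors simply write ``By (iii) we obtain that $x^*\in Fix\,(T_\lambda)$'' without addressing that the limit is only known in the $d$-norm while (iii), as stated, gives $\rho$-continuity. In the classical Maia theorem (Theorem \ref{th4} above), condition (iii) is continuity with respect to the \emph{weaker} metric $d$, and that is clearly the intended hypothesis here as well; the appearance of $\|\cdot\|_\rho$ in (iii) is almost certainly a typographical slip. With that reading your proof is complete and matches the paper's.
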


\begin{proof}
Let $x_0\in X$. By (iv), we deduce similarly to the proof of Theorem \ref{th1} that $\{T_{\lambda}^n(x_0)\}$ is a Cauchy sequence in $(X,\|\cdot\|_{\rho})$, where as usually $\lambda=\dfrac{1}{b+1}$. By (i), $\{T_{\lambda}^n(x_0)\}$ is a Cauchy sequence in $(X,d)$ and by (ii) it converges. Let
$$
x^*=\lim_{n\rightarrow \infty} T_{\lambda}^n(x_0).
$$
By (iii) we obtain that $x^*\in Fix\,(T_{\lambda})$ and by (iv) that $Fix\,(T_{\lambda})=\{x^*\}$. Since $Fix\,(T_{\lambda})=Fix\,(T)$, the conclusion follows.
\end{proof}

\section{Conclusions}

1. We introduced the class of enriched contractions, which includes the Picard-Banach contractions as a particular case and also certain nonexpansive mappings. 

2. We have shown that any enriched contraction has a unique fixed point that can be approximated by means of some Kransnoselskij iterations. 

3. We presented  relevant examples to show that the class of enriched contractions strictly includes the Picard-Banach contractions in the sense that there exists mappings which are not contractions and belong to the class of enriched contractions.

4. It is worth to mention that enriched contractions preserve a fundamental property of Picard-Banach  contractions: any enriched contraction has a unique fixed point and is continuous.

5. It is well known that the class of Picard-Banach contractions is independent of the class of Kannan contractions (which are in general discontinuous mappings) but, in view of Example \ref{ex1}, there exist enriched Banach contractions which are simultaneously enriched Kannan contractions. 

6. We also obtained a local version (Corollary \ref{cor1}) as well as an asymptotic version  (Theorem \ref{th2}) of the enriched contraction mapping principle  (Theorem \ref{th1}).

7. Last but not least, we obtained a fixed point theorem of Maia type which extends Maia fixed point theorem (Theorem \ref{th4}) from the class of Picard-Banach contractions to the class of enriched contractions  (Theorem \ref{th5}).

8. Picard-Banach contractions have been previously extended in \cite{Ber04} to the so called {\it almost contractions}, a class of mappings which preserve most of the features of  Picard-Banach contractions but may have more than one fixed point, see also \cite{Alg}, \cite{Ber12}, \cite{BerP} and references therein. Our aim is to unify the class of almost contractions and enriched contractions in a future work.

\vskip 0.5 cm {\it $^{1}$ Department of Mathematics and Computer Science

North University Center at Baia Mare

Technical University of Cluj-Napoca 

Victoriei 76, 430122 Baia Mare ROMANIA

E-mail: vberinde@cunbm.utcluj.ro}

\vskip 0.5 cm {\it $^{2}$ Academy of Romanian Scientists  (www.aosr.ro)

E-mail: vasile.berinde@gmail.com}

\vskip 0.5 cm {\it $^{3}$ Department of Statistics, Analysis, Forecast and Mathematics

Faculty of Economics and Bussiness Administration 

Babe\c s-Bolyai University of Cluj-Napoca,  Cluj-Napoca ROMANIA

E-mail: madalina.pacurar@econ.ubbcluj.ro}

\end{document}